\documentclass{aptpub}
\numberwithin{equation}{section}
\authornames{Z.-C. Hu and B. Jiang} 
\shorttitle{Joint ruin probabilities with constant interest rate} 



\begin{document}

\title{On joint ruin probabilities of a two-dimensional risk model with constant interest rate} 

\authorone[Nanjing University]{ZECHUN HU} 
\authortwo[Monash University]{BIN JIANG}
\addressone{Department of Mathematics, Nanjing University, Nanjing 210093, China.\\
Email address: huzc@nju.edu.cn} 

\addresstwo{Department of Econometrics and Business Statistics, Monash University, Clayton, Victoria 3800, Australia. Email address: bin.jiang@monash.edu}

\begin{abstract}
In this note we consider the two-dimensional risk model
introduced in Avram et al. \cite{APP08} with constant interest rate.
We derive the  integral-differential equations of the Laplace transforms, and asymptotic
expressions for the finite time ruin probabilities  with respect to the
joint ruin times $T_{\rm max}(u_1,u_2)$  and $T_{\rm min}(u_1,u_2)$ respectively.
\end{abstract}

\smallskip

\keywords{Two-dimensional  risk model; constant
interest rate; joint ruin probability; integral-differential
equation; asymptotic expression} 

\smallskip

\ams{91B30}{60J25} 

\section{Introduction and Preliminaries} 
Ruin theory for the univariate risk model has been studied
extensively, see Asmussen \cite{As00}, Rolski et al. \cite{Ro99} and
many recent papers. Contrarily, there are only a few research on
multivariate risk models. Chan et al. \cite{CYZ03} studied the
following two-dimensional risk model
\begin{eqnarray*}
\left(\begin{array}{l}U_1(t)\\U_2(t)\end{array}\right)
=\left(\begin{array}{l}u_1\\u_2\end{array}\right)+\left(\begin{array}{l}c_1\\c_2\end{array}\right)t-
\sum_{j=1}^{N(t)}\left(\begin{array}{l}X_{1j}\\X_{2j}\end{array}\right),
\end{eqnarray*}
where for fixed $i=1$ or 2, $\{X_{ij},j=1,2,\ldots\}$ are i.i.d.
claim size random variables, $\{X_{1j},j=1,2,\ldots\}$ and
$\{X_{2j},j=1,2,\ldots\}$ are independent, and both of them are also
independent of the Poisson process $N(t)$.

Cai and Li \cite{CL05} studied the multivariate risk model
\begin{eqnarray}\label{CL}
\left(\begin{array}{c}U_1(t)\\\vdots\\U_s(t)\end{array}\right)
=\left(\begin{array}{c}u_1+p_1t-\sum_{n=1}^{N(t)}X_{1,n}\\\vdots\\
u_s+p_st-\sum_{n=1}^{N(t)}X_{s,n}\end{array}\right),
\end{eqnarray}
where $\{(X_{1,n},\ldots,X_{s,n}),n\geq 1\}$ is a sequence of i.i.d.
non-negative random vectors, and independent of  the Poisson process
$N(t)$. The model (\ref{CL}) was further studied by Cai and Li in
\cite{CL07}.

Yuen et al. \cite{YGW06} discussed the bivariate compound
Poisson model
\begin{eqnarray*}
\left(\begin{array}{l}U_1(t)\\U_2(t)\end{array}\right)
=\left(\begin{array}{l}u_1\\u_2\end{array}\right)+\left(\begin{array}{l}c_1\\c_2\end{array}\right)t-
\left(\begin{array}{l}\sum\limits_{i=1}^{M_1(t)+M(t)}X_i\\\sum\limits_{i=1}^{M_2(t)+M(t)}Y_i\end{array}\right),
\end{eqnarray*}
where $M_1(t), M_2(t)$ and $M(t)$ are three independent Poisson
processes, $X_i(Y_i)$ are i.i.d. claim size random variables,
$\{X_i,i\geq 1\}$ and $\{Y_i,i\geq 1\}$ are independent and they are
independent of the three Poisson processes.

Li et al. \cite{LLT07} discussed the bidimemsional
perturbed risk model
\begin{eqnarray*}
\left(\begin{array}{l}U_1(t)\\U_2(t)\end{array}\right)
=\left(\begin{array}{l}u_1\\u_2\end{array}\right)+\left(\begin{array}{l}c_1\\c_2\end{array}\right)t-
\sum_{j=1}^{N(t)}\left(\begin{array}{l}X_{1j}\\X_{2j}\end{array}\right)+
\left(\begin{array}{l}\sigma_1B_1(t)\\\sigma_2B_2(t)\end{array}\right),
\end{eqnarray*}
where $N(t)$ is a Poisson process, $\{(X_{1j},X_{2j}),j\geq 1\}$ is
a sequence of i.i.d. random vectors, $(B_1(t),B_2(t))$ is a standard
bidimensional Brownian motion, and the three processes are mutually
independent.

Avram et al. \cite{APP08} studied the two-dimensional risk
model below
\begin{eqnarray}\label{model}
\left(\begin{array}{l}U_1(t)\\U_2(t)\end{array}\right)
=\left(\begin{array}{l}u_1\\u_2\end{array}\right)+\left(\begin{array}{l}c_1\\c_2\end{array}\right)t-
\left(\begin{array}{l}\delta_1\\\delta_2\end{array}\right)S(t),
\end{eqnarray}
where  $S(t)$ is a L\'{e}vy process with only upward jumps that
represents the cumulative amount of claims up to time $t$, and the
paper focuses on the classic Cram\'{e}r-Lundberg model, i.e. $S(t)$
is a compound Poisson process.

In this note, we discuss the above two-dimensional risk
model (\ref{model}) with constant interest rate. About univariate ruin models with investment income, there have been a lot
of research. Please refer to the recent survey paper Paulsen
\cite{Pa08} and the references therein.

Now we introduce our model. Let $r$ be a nonnegative constant, which
represents the interest rate. Then our model can be expressed as
follows:
\begin{eqnarray}\label{model-1}
U_i(t)=e^{r t} u_i+c_i \int_{0}^{t} e^{r (t-v)}dv -\delta_i
\int_{0}^{t} e^{r (t-v)}d S_v, \  i=1, 2,
\end{eqnarray}
where $u_i$ are the initial reserves, $c_i$ are the premium rates,
and $0<\delta_1,\delta_2<1$ with $\delta_1+\delta_2=1$.
 $S_t$ is
taken to be a compound Poisson process, i.e.
$S_t=\sum\limits_{k=1}^{N(t)}\sigma_k,\  t \geq 0, $ where $N(t)$ is
a Poisson process with  intensity $\lambda>0$ and $\{\sigma_k,
k\geq1\}$ is a sequence of i.i.d. random variables independent of
$N(t)$. Denote by $F$ the distribution function, by $f$ the
probability density function of $\sigma_k$, respectively. Let
$\theta_k$ be the arrival time of the $k$-th claim. Then we can
rewrite (\ref{model-1}) as
\begin{eqnarray}\label{model-2}
U_i(t)=e^{r t} u_i+\frac{c_i}{r} (e^{r t}-1)-\delta_i
\sum\limits_{k=1}^{N(t)} e^{r (t-\theta_k)} \sigma_k,\  i=1, 2.
\end{eqnarray}
For $k=1,2,\ldots,$ denote by $T_k$ the inter-time between the
$(k-1)$-th claim and the $k$-th claim. Then $\{T_k,k\geq 1\}$ is a
sequence of i.i.d. random variables with exponential distribution
with the parameter $\lambda$, and $ \theta_k=\sum_{i=1}^kT_i. $

Define  two joint ruin times by
\begin{eqnarray*}
&&T_{\rm min}(u_1,u_2):=\inf\{t \geq 0|\min\{U_1(t),U_2(t)\} < 0\},\\
&&T_{\rm max}(u_1,u_2):=\inf\{t \geq 0|\max\{U_1(t),U_2(t)\} < 0\},
\end{eqnarray*}
and the corresponding  ruin probabilities
\begin{eqnarray*}
&&\psi_{\rm min}(u_1,u_2):=P\{T_{\rm min}(u_1,u_2) < \infty \},\\
&&\psi_{\rm max}(u_1,u_2):=P\{T_{\rm max}(u_1,u_2) < \infty \}.
\end{eqnarray*}

As in \cite{APP08}, we assume that $c_1/\delta_1 > c_2/\delta_2.$
Then if  $u_1/\delta_1 > u_2/\delta_2$, the above two joint ruin
probabilities degenerate into one-dimensional ruin probabilities as
follows:
\begin{eqnarray*}
&&\psi_{\rm min}(u_1,u_2)=\psi_{2}(u_2):=P\{\exists\, t<\infty\ s.t.\ U_2(t)<0\},\\
&&\psi_{\rm max}(u_1,u_2)=\psi_{1}(u_1):=P\{\exists\, t<\infty\ s.t.\
U_1(t)<0\}.
\end{eqnarray*}
Refer to \cite{APP08} for the deduction. Throughout the rest of this note, we
assume that $c_1/\delta_1 > c_2/\delta_2$ and  $u_1/\delta_1\leq
u_2/\delta_2$.

\begin{rem}
For each i, we know that
\begin{eqnarray*}
U_i(t)=e^{r t} u_i+c_i \int_{0}^{t} e^{r (t-v)}dv -\delta_i
\int_{0}^{t} e^{r (t-v)}d S_v=e^{r t} u_i+\int_{0}^{t}e^{r
(t-v)}d(c_i v-\delta_i S_v).
\end{eqnarray*}
Denote $\vec{U}(t)=(U_1(t),U_2(t))$, $\vec{u}=(u_1,u_2)$ and
$\vec{Z}_v=(c_1 v-\delta_1 S_v, c_2 v-\delta_2 S_v)$. Then we have
\begin{eqnarray}\label{rem2.1-a}
\vec{U}(t)= e^{r t} \vec{u}+ \int_{0}^{t}e^{r (t-v)}d \vec{Z}_v=e^{r
t}\left(\vec{u}+ \int_{0}^{t}e^{-rv}d \vec{Z}_v\right).
\end{eqnarray}
Differentiating both  sides of (\ref{rem2.1-a}) relative to $t$, we
obtain
\begin{eqnarray}\label{rem2.1-b}
d\vec{U}(t)&=&re^{rt}\left(\vec{u}+\int_0^t e^{-rv}d\vec{Z}_v\right)dt+e^{rt}e^{-rt}d\vec{Z}_t\nonumber\\
&=&r\vec{U}(t)dt+d\vec{Z}_t.
\end{eqnarray}
Integrating  both  sides of (\ref{rem2.1-b}) relative to $t$, we get
\begin{eqnarray}\label{rem2.1-c}
\vec{U}(t)=\vec{U}(0)+r\int_0^t\vec{U}(s)ds+\int_0^td\vec{Z}_s.
\end{eqnarray}
By (\ref{rem2.1-c}) and  the fact that
$(t,\vec{Z}_t)=(t,c_1t-\delta_1 S(t),c_2t-\delta_2 S(t))$ are a
three-dimensional L\'{e}vy process, following Protter \cite[Theorem 32]{Pr92}, we know that $\vec{U}(t)$ is a two-dimensional
homogeneous strong Markov process.
\end{rem}

The rest of this note is organized as follows. In Section 2, we show
the integral-differential equations of the Laplace transforms of the
joint ruin times $T_{\rm min}(u_1,u_2)$ and $T_{\rm max}(u_1,u_2)$ respectively. In Section 3, we provide two
asymptotic expressions for the finite time ruin probabilities with respect to
the joint ruin time $T_{\rm max}(u_1,u_2)$ and $T_{\rm min}(u_1,u_2)$ respectively.

\section{Integral-differential equation}
In this section, we establish the integral-differential equations of the Laplace transforms of the joint ruin times $T_{\rm min}(u_1,u_2)$ and $T_{\rm max}(u_1,u_2)$ respectively.

\subsection{The result about $T_{\rm min}(u_1,u_2)$}
In this subsection, we consider the joint ruin time $T_{\rm min}(u_1,u_2)$.
For convenience, we denote $T_{\rm min}(u_1,u_2)$ by $\tau(u_1,u_2)$.
Its Laplace transform is defined by
\begin{eqnarray}
\Psi_{\rm min}(u_1,u_2,s):= E\left[e^{-s \tau(u_1,u_2)}\right], \quad\mbox{for}\ s>0.
\end{eqnarray}
Then
\begin{eqnarray}\label{bound}
0\leq \Psi_{\rm min}(u_1,u_2,s)\leq 1.
\end{eqnarray}
Now we have the following result.

\begin{thm}\label{thm3.1}
 For $\frac{u_1}{\delta_1} \leq \frac{u_2}{\delta_2}$ and $s>0$, the function $\Psi_{\rm min}(\cdot,\cdot,s)$ satisfies the following integral-differential equation
\begin{eqnarray}\label{thm3.1-0}
&&\left(u_1+\frac{c_1}{r}\right) \frac{\partial \Psi_{\rm min}}{\partial
u_1}+\left(u_2+\frac{c_2}{r}\right) \frac{\partial
\Psi_{\rm min}}{\partial u_2}-\frac{\lambda+s}{r}
\Psi_{\rm min}\nonumber\\
&&\hskip 2.8cm+\frac{\lambda}{r} \int_{0}^{\infty}
\Psi_{\rm min}(u_1-\delta_1 z,u_2-\delta_2 z,s)f(z)d z=0
\end{eqnarray}
with the boundary condition
\begin{eqnarray}\label{thm3.1-k}
\Psi_{\rm min}(u_1,\frac{\delta_2}{\delta_1}u_1,s)= E\left[e^{-s \tau_2(\frac{\delta_2}{\delta_1}u_1)}\right],
\end{eqnarray}
where $f(z)$ is the probability density function of $\sigma_k$ and $\tau_2$ is the ruin time of risk process $U_2(t)$. Furthermore, $\Psi_{\rm min}$ is the unique solution of (\ref{thm3.1-0})-(\ref{thm3.1-k}).
\end{thm}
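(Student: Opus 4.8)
The plan is to characterise $\Psi_{\rm min}(\cdot,\cdot,s)$ as the function that is $(\mathcal{A}-s)$-harmonic on the open wedge $W:=\{(u_1,u_2):u_1>0,\ u_1/\delta_1<u_2/\delta_2\}$, where $\mathcal{A}$ is the infinitesimal generator of the homogeneous strong Markov process $\vec{U}$ of the Remark, $\mathcal{A}g(u_1,u_2)=(ru_1+c_1)\partial_{u_1}g+(ru_2+c_2)\partial_{u_2}g+\lambda\int_0^{\infty}[g(u_1-\delta_1z,u_2-\delta_2z)-g(u_1,u_2)]f(z)\,dz$, and then to read off (\ref{thm3.1-0}) by dividing $(\mathcal{A}-s)\Psi_{\rm min}=0$ by $r$. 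To obtain $(\mathcal{A}-s)\Psi_{\rm min}=0$ directly, I would condition on the behaviour of $\vec{U}$ on a short interval $[0,h]$: with probability $1-\lambda h+o(h)$ there is no claim and both coordinates follow the strictly increasing deterministic flow $U_i(h)=(u_i+c_i/r)e^{rh}-c_i/r=u_i+(ru_i+c_i)h+o(h)$, so no ruin occurs and, by the Markov property, this contributes $e^{-(\lambda+s)h}\Psi_{\rm min}(U_1(h),U_2(h),s)$; with probability $\lambda h+o(h)$ there is exactly one claim whose jump lands at $(u_1-\delta_1z,u_2-\delta_2z)+o(h)$, contributing $\lambda h\int_0^{\infty}\Psi_{\rm min}(u_1-\delta_1z,u_2-\delta_2z,s)f(z)\,dz+o(h)$ under the convention $\Psi_{\rm min}(\cdot,\cdot,s)\equiv1$ once a coordinate is negative (there $\tau=0$); two or more claims contribute $o(h)$. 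Inserting $e^{-(\lambda+s)h}=1-(\lambda+s)h+o(h)$ and the Taylor expansion $\Psi_{\rm min}(U_1(h),U_2(h),s)=\Psi_{\rm min}(u_1,u_2,s)+h[(ru_1+c_1)\partial_{u_1}+(ru_2+c_2)\partial_{u_2}]\Psi_{\rm min}+o(h)$, subtracting $\Psi_{\rm min}(u_1,u_2,s)$, dividing by $h$, letting $h\downarrow0$, and dividing by $r$ gives (\ref{thm3.1-0}).

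For the boundary condition (\ref{thm3.1-k}) I would show that on the diagonal $u_1/\delta_1=u_2/\delta_2$ the joint ruin time degenerates to $\tau_2$. Put $D(t):=U_1(t)/\delta_1-U_2(t)/\delta_2$. Dividing $dU_i=rU_i\,dt+c_i\,dt-\delta_i\,dS_t$ by $\delta_i$ and subtracting, the common $dS_t$ terms cancel, so $D$ is continuous and solves $dD(t)=(rD(t)+\gamma)\,dt$ with $\gamma:=c_1/\delta_1-c_2/\delta_2>0$; started from $D(0)=0$ this forces $D(t)\ge0$ for all $t\ge0$ (indeed $D$ stays $\ge0$ once it is, so the reduction is persistent). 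Hence $U_1(t)/\delta_1\ge U_2(t)/\delta_2$ throughout, so $\{U_1(t)<0\}\subseteq\{U_2(t)<0\}$ and $\min\{U_1(t),U_2(t)\}<0$ exactly when $U_2(t)<0$; therefore $\tau(u_1,\frac{\delta_2}{\delta_1}u_1)=\tau_2(\frac{\delta_2}{\delta_1}u_1)$, which gives (\ref{thm3.1-k}).

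For uniqueness, let $g$ be any bounded $C^1$ solution of (\ref{thm3.1-0})--(\ref{thm3.1-k}), extended by $g\equiv1$ once a coordinate is negative, let $\rho:=\inf\{t\ge0:U_1(t)/\delta_1=U_2(t)/\delta_2\}$, and set $M_t:=e^{-s(t\wedge\tau\wedge\rho)}g(\vec{U}(t\wedge\tau\wedge\rho))$. Because the claim process has finite activity, It\^{o}'s formula shows that $g(\vec{U}(t))-\int_0^t(\mathcal{A}g)(\vec{U}(v))\,dv$ is a martingale, and $(\mathcal{A}-s)g=0$ on $W$ by (\ref{thm3.1-0}), so $M$ is a bounded martingale; note that, since $f$ is a density, $\vec{U}$ a.s.\ never hits $\{u_1=0\}$, which is why (\ref{thm3.1-k}) alone — with no extra condition on $\{u_1=0\}$ — pins the solution down. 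Letting $t\to\infty$, the contribution of $\{\tau\wedge\rho=\infty\}$ vanishes thanks to the factor $e^{-st}$, and optional stopping gives $g(u_1,u_2)=E[e^{-s(\tau\wedge\rho)}g(\vec{U}(\tau\wedge\rho));\ \tau\wedge\rho<\infty]$. On $\{\tau\le\rho\}$ ruin happens inside $W$ with $U_1(\tau)<0$, so $g(\vec{U}(\tau))=1$; on $\{\rho<\tau\}$ the process reaches the diagonal at some point $(u_1',\frac{\delta_2}{\delta_1}u_1')$ where, by (\ref{thm3.1-k}), the strong Markov property, and the identification (from the previous paragraph) of the post-$\rho$ ruin time with $\tau_2(\frac{\delta_2}{\delta_1}u_1')$, one has $g(\vec{U}(\rho))=E[e^{-s(\tau-\rho)}\mid\mathcal{F}_\rho]$. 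Combining the two cases yields $g(u_1,u_2)=E[e^{-s\tau}]=\Psi_{\rm min}(u_1,u_2,s)$.

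The main obstacle is the regularity/verification bookkeeping rather than any single computation: the Taylor step and It\^{o}'s formula require $\Psi_{\rm min}$, and any competing solution, to be $C^1$ in $(u_1,u_2)$, so one must either prove this a priori (for instance from the Volterra-type integral equation obtained by integrating (\ref{thm3.1-0}) along the characteristics $\dot{u}_i=ru_i+c_i$ of the drift, whose right-hand side is an integral average of $\Psi_{\rm min}$ and hence smoothing) or state uniqueness within the class of bounded $C^1$ solutions; one must also check that the convention $\Psi_{\rm min}\equiv1$ on the ruin set is consistent with the integral term in (\ref{thm3.1-0}) and that the optional-stopping and integrability steps are valid uniformly over $W$.
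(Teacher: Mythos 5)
Your derivation of the integro-differential equation is essentially the paper's: both condition on whether the first claim occurs in $[0,h]$, use the (strong) Markov property, and let $h\downarrow 0$; the paper just does this exactly (conditioning on the precise first-claim time $T_1$ to get the identity (\ref{thm3.1-f}), valid for every $h>0$, and then forming a difference quotient in $y=e^{rh}-1$) rather than via an $o(h)$ expansion, which avoids some of the dominated-convergence handwaving, though both arguments share the same unaddressed regularity issue (only the directional derivative along $(u_1+c_1/r,\,u_2+c_2/r)$ is really produced, and you correctly flag this). Your boundary-condition argument via $D(t)=U_1(t)/\delta_1-U_2(t)/\delta_2$ is correct and is in fact more explicit than the paper, which simply cites Avram et al.\ for the degeneration to $\tau_2$; note that the jumps are parallel to the diagonal, so $D$ is deterministic and your ODE argument is exact. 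Where you genuinely diverge is uniqueness: the paper keeps the exact renewal identity (\ref{thm3.1-g}), reads it as a fixed-point equation $\mathcal{T}g=g$, and shows $\mathcal{T}$ is a contraction in the sup norm with factor $(\lambda+se^{-(\lambda+s)h})/(\lambda+s)<1$, so Banach's theorem gives uniqueness among bounded solutions; you instead run a martingale/optional-stopping verification argument. Each route has a complementary gap: the contraction argument needs (and the paper omits) the step showing that an arbitrary solution of (\ref{thm3.1-0})--(\ref{thm3.1-k}) is a fixed point of $\mathcal{T}$, which requires integrating the equation back along the characteristics $\dot u_i=ru_i+c_i$; your verification argument needs $C^1$ regularity of the competing solution to apply It\^{o}'s formula, so it only gives uniqueness in the class of bounded $C^1$ solutions. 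What your route buys is a transparent probabilistic identification $g=E[e^{-s\tau}]$ and a clear explanation of why the single boundary condition on the diagonal suffices (jumps never cross the diagonal, and the process can only exit the wedge through ruin or through the diagonal); what the paper's route buys is that it works under boundedness alone, with no differentiability hypothesis on the competitor. Since you state uniqueness within the bounded $C^1$ class and flag the remaining bookkeeping, I regard the proposal as correct.
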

\begin{proof} {\bf Existence:} For any $h>0$, by considering the occurrence
time $T_1$ of the first claim, we have
\begin{eqnarray}\label{thm3.1-a}
E[e^{-s \tau(u_1,u_2)}]=E[e^{-s \tau(u_1,u_2)},T_1>h]+E[e^{-s \tau(u_1,u_2)},T_1 \leq
h].
\end{eqnarray}
For any $t\geq 0$, denote by $\mathcal{F}_t$ the information of the
two-dimensional risk process $\{(U_1(s),U_2(s)):s\geq 0\}$ up to
time $t$, and by $\theta_t$ the shift operator of the sample path,
i.e. $(\theta_t(\omega))_s=\omega_{s+t}$ for any sample path
$\omega=(\omega_s,s\geq 0)$. By the properties of conditional
expectation and the strong Markov property, we have
\begin{eqnarray}\label{thm3.1-b}
&&E[e^{-s \tau(u_1,u_2)},T_1>h]=E[e^{-s \tau(u_1,u_2)}
\textbf{1}_{\{T_1 > h\}}]\nonumber\\
&&=E[E[e^{-s \tau(u_1,u_2)} \textbf{1}_{\{T_1
> h\}}|\mathcal{F}_h]]\nonumber\\
&&=E[\textbf{1}_{\{T_1 > h\}} E[e^{-s [h+\tau \circ
\theta_h]}|\mathcal{F}_h]]\nonumber\\
&&=E\left[\textbf{1}_{\{T_1 > h\}} e^{-s h} E_{(U_1(h),U_2(h))}[e^{-s \tau}]\right]\nonumber\\
&&=\int_{h}^{\infty} e^{-s h} \Psi_{\rm min}\left(e^{r h} u_1+\frac{c_1}{r} (e^{r h}-1),e^{r h} u_2+\frac{c_2}{r} (e^{r h}-1),s\right)\lambda e^{-\lambda u} d u\nonumber\\
&&=e^{-(\lambda+s)h}\Psi_{\rm min}\left(e^{r h} u_1+\frac{c_1}{r} (e^{r h}-1),e^{r h} u_2+\frac{c_2}{r} (e^{r h}-1),s\right).
\end{eqnarray}
For the second item on the right side of (\ref{thm3.1-a}), we have
\begin{eqnarray}\label{thm3.1-c}
&&E[e^{-s \tau(u_1,u_2)},T_1 \leq h]\nonumber\\
&&=E\left[e^{-s
\tau(u_1,u_2)},T_1 \leq h, \sigma_1 \leq \frac{e^{r T_1} u_1+\frac{c_1}{r} (e^{r T_1}-1)}{\delta_1}
\wedge \frac{e^{r T_1} u_2+\frac{c_2}{r} (e^{r T_1}-1)}{\delta_2}\right]\nonumber\\
&&\ +E\left[e^{-s \tau},T_1 \leq h, \sigma_1 > \frac{e^{r T_1} u_1+\frac{c_1}{r} (e^{r T_1}-1)}{\delta_1}
\wedge \frac{e^{r T_1} u_2+\frac{c_2}{r} (e^{r T_1}-1)}{\delta_2}\right].
\end{eqnarray}
By the strong Markov property, we have
\begin{eqnarray}\label{thm3.1-d}
&&E\left[e^{-s
\tau(u_1,u_2)},T_1 \leq h, \sigma_1 \leq \frac{e^{r T_1} u_1+\frac{c_1}{r} (e^{r T_1}-1)}{\delta_1}
\wedge \frac{e^{r T_1} u_2+\frac{c_2}{r} (e^{r T_1}-1)}{\delta_2}\right]\nonumber\\
&&=E\left[e^{-s T_1}
E_{(U_1(T_1),U_2(T_1))}[e^{-s
\tau}],T_1 \leq h,\phantom{\int_0^1}\right.\nonumber\\
&&\quad\quad\quad\quad\left.\sigma_1 \leq \frac{e^{r T_1} u_1+\frac{c_1}{r} (e^{r T_1}-1)}{\delta_1}
\wedge \frac{e^{r T_1} u_2+\frac{c_2}{r} (e^{r T_1}-1)}{\delta_2}\right]\nonumber\\
&&=\int_{0}^{h} \lambda e^{-\lambda t} d t \int_{0}^{\frac{e^{r t} u_1+\frac{c_1}{r} (e^{r t}-1)}{\delta_1}
\wedge \frac{e^{r t} u_2+\frac{c_2}{r} (e^{r t}-1)}{\delta_2}} e^{-s t}\nonumber\\
&&\quad\times\Psi_{\rm min}\left(e^{r t} u_1+\frac{c_1}{r} (e^{r t}-1)-\delta_1 z,e^{r t} u_2+\frac{c_2}{r} (e^{r t}-1)-\delta_2 z,s\right) f(z)dz.
\end{eqnarray}
On the other hand, if $\sigma_1 > \frac{e^{r T_1} u_1+\frac{c_1}{r} (e^{r T_1}-1)}{\delta_1}
\wedge \frac{e^{r T_1} u_2+\frac{c_2}{r} (e^{r T_1}-1)}{\delta_2}$, then $\tau(u_1,u_2)=T_1$, and
thus
\begin{eqnarray}\label{thm3.1-e}
&&E\left[e^{-s \tau(u_1,u_2)},T_1 \leq h, \sigma_1 > \frac{e^{r T_1} u_1+\frac{c_1}{r} (e^{r T_1}-1)}{\delta_1}
\wedge \frac{e^{r T_1} u_2+\frac{c_2}{r} (e^{r T_1}-1)}{\delta_2}\right]\nonumber\\
&&=E\left[e^{-s T_1},T_1 \leq h, \sigma_1 > \frac{e^{r T_1} u_1+\frac{c_1}{r} (e^{r T_1}-1)}{\delta_1}
\wedge \frac{e^{r T_1} u_2+\frac{c_2}{r} (e^{r T_1}-1)}{\delta_2}\right]\nonumber\\
&&=\int_{0}^{h} \lambda e^{-\lambda t} d t\int_{\frac{e^{r t} u_1+\frac{c_1}{r} (e^{r t}-1)}{\delta_1}
\wedge \frac{e^{r t} u_2+\frac{c_2}{r} (e^{r t}-1)}{\delta_2}}^{\infty} e^{-s t} f(z) dz.
\end{eqnarray}
By (\ref{thm3.1-a})-(\ref{thm3.1-e}), we obtain
\begin{eqnarray}\label{thm3.1-f}
&&\Psi_{\rm min}(u_1,u_2,s)\nonumber\\
&&=e^{-(\lambda+s)h}\Psi_{\rm min}\left(e^{r h} u_1+\frac{c_1}{r} (e^{r h}-1),e^{r h} u_2+\frac{c_2}{r} (e^{r h}-1),s\right)\nonumber\\
&&\quad+\int_{0}^{h} \lambda e^{-\lambda t} d t\int_{0}^{\frac{e^{r t} u_1+\frac{c_1}{r} (e^{r t}-1)}{\delta_1}
\wedge \frac{e^{r t} u_2+\frac{c_2}{r} (e^{r t}-1)}{\delta_2}} e^{-s t}\nonumber\\
&&\quad\quad\quad\times\Psi_{\rm min}\left(e^{r t} u_1+\frac{c_1}{r} (e^{r t}-1)-\delta_1 z,e^{r t} u_2+\frac{c_2}{r} (e^{r t}-1)-\delta_2 z,s\right) f(z) dz\nonumber\\
&&\quad+\int_{0}^{h} \lambda e^{-\lambda t} d t\int_{\frac{e^{r t} u_1+\frac{c_1}{r} (e^{r t}-1)}{\delta_1}
\wedge \frac{e^{r t} u_2+\frac{c_2}{r} (e^{r t}-1)}{\delta_2}}^{\infty} e^{-s t} f(z)dz.
\end{eqnarray}
By the definition of $\Psi_{\rm min}(\cdot,\cdot,\cdot)$, we know that
if $z > \frac{e^{r t} u_1+\frac{c_1}{r} (e^{r t}-1)}{\delta_1}
\wedge \frac{e^{r t} u_2+\frac{c_2}{r} (e^{r t}-1)}{\delta_2}$, then
$\Psi_{\rm min}(e^{r t} u_1+\frac{c_1}{r} (e^{r t}-1)-\delta_1 z,e^{r t} u_2+\frac{c_2}{r} (e^{r t}-1)-\delta_2 z,s)=1. $ By
virtue of this fact and letting $y:=e^{r h}-1$, $q_1:=u_1+\frac{c_1}{r}$ and $q_2:=u_2+\frac{c_2}{r}$, we can rewrite
(\ref{thm3.1-f}) by
\begin{eqnarray}\label{thm3.1-g}
&&\Psi_{\rm min}(u_1,u_2,s)\nonumber\\
&&=e^{-(\lambda+s)h}\Psi_{\rm min}\left(u_1+q_1 y,u_2+q_2 y,s\right)\nonumber\\
&&\quad+\int_{0}^{h} \lambda e^{-\lambda t} d t\int_{0}^{\infty} e^{-st}\nonumber\\
&&\quad\quad\times\Psi_{\rm min}\left(e^{r t} u_1+\frac{c_1}{r} (e^{r t}-1)-\delta_1 z,e^{r t} u_2+\frac{c_2}{r} (e^{r t}-1)-\delta_2 z,s\right)f(z)dz.\quad\quad\quad
\end{eqnarray}
It's easy to check that  $y \uparrow 0$ if and only if $h \downarrow 0$. Hence by
(\ref{thm3.1-g}), we have
\begin{eqnarray}\label{thm3.1-h}
\lim_{y\uparrow 0}\Psi_{\rm min}(u_1+q_1 y,u_2+q_2 y,s)=
\Psi_{\rm min}(u_1,u_2,s).
\end{eqnarray}
By (\ref{thm3.1-g}), for any $h>0$ and $y=e^{r h}-1$, we have
\begin{eqnarray*}
0&=&\frac{\Psi_{\rm min}(u_1+q_1 y,u_2+q_2
y,s)-\Psi_{\rm min}(u_1,u_2,s)}{y}\nonumber\\
&&+\frac{e^{-(\lambda+s)h}-1}{y}\Psi_{\rm min}(u_1+q_1
y,u_2+q_2
y,s)\nonumber\\
&&+\frac{1}{y}\int_{0}^{h} \lambda e^{-\lambda t} d
t\int_{0}^{\infty} e^{-s t}\nonumber\\
&&\quad\quad\quad\times\Psi_{\rm min}\left(e^{r t} u_1+\frac{c_1}{r} (e^{r t}-1)-\delta_1 z,e^{r t} u_2+\frac{c_2}{r} (e^{r t}-1)-\delta_2 z,s\right) f(z) d z\nonumber
\end{eqnarray*}
\begin{eqnarray}\label{thm3.1-i}
&=&\frac{\Psi_{\rm min}(u_1+q_1 y,u_2+q_2
y,s)-\Psi_{\rm min}(u_1,u_2,s)}{y}\nonumber\\
&&+\frac{e^{-(\lambda+s) h}-1}{e^{r
h}-1}\Psi_{\rm min}(u_1+q_1
y,u_2+q_2 y,s)\nonumber\\
&&+\frac{1}{e^{r h}-1}\int_{0}^{h} \lambda e^{-\lambda t} d
t\int_{0}^{\infty} e^{-s t}\nonumber\\
&&\quad\times\Psi_{\rm min}\left(e^{r t} u_1+\frac{c_1}{r} (e^{r t}-1)-\delta_1 z,e^{r t} u_2+\frac{c_2}{r} (e^{r t}-1)-\delta_2 z,s\right) f(z) d z.\quad\quad\quad\label{extra}
\end{eqnarray}
By (\ref{thm3.1-h}), letting $y \uparrow0,h \downarrow 0$  in the above
formula and noticing that (\ref{bound}) assures the interchange of limitation and integration, we obtain
\begin{eqnarray}\label{thm3.1-i}
&&q_1 \frac{\partial \Psi_{\rm min}}{\partial u_1}+q_2 \frac{\partial
\Psi_{\rm min}}{\partial u_2}-\frac{\lambda+s}{r}
\Psi_{\rm min}\nonumber\\
&&\hskip 1.45cm+\frac{\lambda}{r} \int_{0}^{\infty}
\Psi_{\rm min}(u_1-\delta_1 z,u_2-\delta_2 z,s)f(z)d z=0.
\end{eqnarray}
Replacing $q_1$ and $q_2$ in (\ref{thm3.1-i}) by $u_1+\frac{c_1}{r}$ and $u_2+\frac{c_2}{r}$ respectively, we obtain the integral-differential equation. When $u_1/\delta_1 = u_2/\delta_2$, the joint ruin model degenerates into a univariate model, and then by the  analysis in \cite{APP08}, we get  the boundary condition.

{\bf Uniqueness:} By using similar arguments in Gerber \cite{Ge81}
 and noticing (\ref{thm3.1-g}), we define an
operator $\mathcal{T}$ by
\begin{eqnarray*}
\mathcal{T} g(u_1,u_2,s)&=&e^{-(\lambda+s)h}g\left(u_1+q_1 y,u_2+q_2 y,s\right)\nonumber\\
&&+\int_{0}^{h} \lambda e^{-\lambda t} d t\int_{0}^{\infty} e^{-s
t}\nonumber\\
&&\times g\left(e^{r t} u_1+\frac{c_1}{r} (e^{r t}-1)-\delta_1 z,e^{r t} u_2+\frac{c_2}{r} (e^{r t}-1)-\delta_2 z,s\right)f(z)dz,
\end{eqnarray*}
for any $h>0$. It can be easily seen that $\Psi_{\rm min}$ is a fixed point of operator $\mathcal{T}$, as $\mathcal{T} \Psi_{\rm min}=\Psi_{\rm min}$. Also, for two different functions $g_1$ and $g_2$ we have for any $h>0$ and $s>0$,
\begin{eqnarray}\label{thm3.1-j}
&&|\mathcal{T}g_1-\mathcal{T}g_2|\nonumber\\
&&\leq e^{-(\lambda+s)h}|g_1\left(u_1+q_1 y,u_2+q_2 y,s\right)-
g_2\left(u_1+q_1 y,u_2+q_2 y,s\right)|\nonumber\\
&&\ \ +\int_{0}^{h} \lambda e^{-\lambda t} d t\int_{0}^{\infty}
e^{-s
t}\left|g_1\left(e^{r t} u_1+\frac{c_1}{r} (e^{r t}-1)-\delta_1 z,e^{r t} u_2+
\frac{c_2}{r} (e^{r t}-1)-\delta_2 z,s\right)\right.\nonumber\\
&&\ \ \left.-g_2\left(e^{r t} u_1+\frac{c_1}{r}
(e^{r t}-1)-\delta_1 z,e^{r t} u_2+\frac{c_2}{r} (e^{r t}-1)-\delta_2 z,s\right)\right|f(z)dz\nonumber\\
&&\leq e^{-(\lambda+s)h}||g_1-g_2||_{\infty}+\left(\int_{0}^{h} \lambda e^{-(\lambda+s) t}dt\right) ||g_1-g_2||_{\infty}\nonumber\\
&&=\frac{\lambda+se^{-(\lambda+s)h}}{\lambda+s}||g_1-g_2||_{\infty},
\end{eqnarray}
where $||\cdot||_{\infty}$ is the  supremum norm over $(u_1,u_2)\in
R^2$. Therefore, $\mathcal{T}$ is a contraction and by Banach's fixed point theorem and (\ref{bound}), the solution of  (\ref{thm3.1-0})-(\ref{thm3.1-k}) is unique.
\end{proof}

\begin{rem}
One way to obtain the Laplace transform $\Psi_{\rm min}$ of the joint ruin
probability $T_{\rm min}(u_1,u_2)$ is to solve the above integral-differential equation (\ref{thm3.1-0})-(\ref{thm3.1-k})  numerically. A natural question arise:

{\it Can we give an analytical representation for the solution to the equation (\ref{thm3.1-0})-(\ref{thm3.1-k}) in some special cases such as exponential claim sizes ?}

Unfortunately, even in the case of exponential claim sizes, we have not found the way to solve the equation (\ref{thm3.1-0})-(\ref{thm3.1-k}).  \end{rem}

\subsection{The result about $T_{\rm max}(u_1,u_2)$}

Define the  Laplace transform of $T_{\rm max}(u_1,u_2)$  by
\begin{eqnarray*}
\Psi_{\rm max}(u_1,u_2,s):= E\left[e^{-s T_{\rm max}(u_1,u_2)}\right], \quad\mbox{for}\ s>0.
\end{eqnarray*}
Then we have the following result.

\begin{thm}\label{thm2.2}
For $\frac{u_1}{\delta_1} \leq \frac{u_2}{\delta_2}$ and $s>0$, the function $\Psi_{\rm max}(\cdot,\cdot,s)$ satisfies the same integral-differential equation (\ref{thm3.1-0}) with the boundary condition
\begin{eqnarray}\label{thm2.2-a}
\Psi_{\rm max}(u_1,\frac{\delta_2}{\delta_1}u_1,s)= E\left[e^{-s \tau_1(u_1)}\right],
\end{eqnarray}
where $f(z)$ is the probability density function of $\sigma_k$ and $\tau_1$ is the ruin time of risk process $U_1(t)$. Furthermore, $\Psi_{\rm max}$ is the unique solution of (\ref{thm3.1-0})-(\ref{thm2.2-a}).
\end{thm}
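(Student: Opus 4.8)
The plan is to repeat, essentially verbatim, the first-claim decomposition used to prove Theorem~\ref{thm3.1}, changing only the event that records ruin at the first claim epoch. Fix $h>0$ and condition on the arrival time $T_1$ of the first claim, splitting $E[e^{-sT_{\rm max}(u_1,u_2)}]$ over $\{T_1>h\}$ and $\{T_1\le h\}$ as in (\ref{thm3.1-a}). On $\{T_1>h\}$ no claim has occurred, so the distinction between ``$\max$'' and ``$\min$'' is irrelevant, and by the strong Markov property and time-homogeneity of $\vec{U}$ together with (\ref{model-2}) one obtains, exactly as in (\ref{thm3.1-b}),
\[
E\big[e^{-sT_{\rm max}(u_1,u_2)},\,T_1>h\big]=e^{-(\lambda+s)h}\,\Psi_{\rm max}\!\left(e^{rh}u_1+\frac{c_1}{r}(e^{rh}-1),\ e^{rh}u_2+\frac{c_2}{r}(e^{rh}-1),\ s\right).
\]

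On $\{T_1\le h\}$ the single change is that, for $T_{\rm max}$, ruin occurs at the epoch $T_1$ precisely when the jump is large enough to push \emph{both} surpluses below zero, i.e. when $\sigma_1>\frac{U_1(T_1^-)}{\delta_1}\vee\frac{U_2(T_1^-)}{\delta_2}$, whereas for $T_{\rm min}$ the threshold was the minimum of these two quantities. On the complementary event ruin has not yet happened and, by the strong Markov property, the process restarts from $\vec{U}(T_1)$; note that $\vec{U}(T_1)$ may now carry one negative coordinate (when $\sigma_1$ falls strictly between the two thresholds), which cannot occur in the $T_{\rm min}$ setting but still lies in the natural domain of $\Psi_{\rm max}$, so the restart is legitimate. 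Carrying this through produces the analogue of (\ref{thm3.1-f}) with $\wedge$ replaced everywhere by $\vee$ and $\Psi_{\rm min}$ replaced by $\Psi_{\rm max}$. The decisive reduction then survives unchanged, because $\Psi_{\rm max}(u_1-\delta_1 z,u_2-\delta_2 z,s)=1$ exactly when both arguments are negative, that is, exactly when $z$ exceeds the relevant maximum; hence the inner $z$-integral may be extended to $[0,\infty)$ and the ``ruin-at-the-jump'' term is absorbed, just as in passing from (\ref{thm3.1-f}) to (\ref{thm3.1-g}). Thus (\ref{thm3.1-g}) holds with $\Psi_{\rm max}$ in place of $\Psi_{\rm min}$; letting $h\downarrow0$, equivalently $y:=e^{rh}-1\uparrow0$, using $0\le\Psi_{\rm max}\le1$ to interchange limit and integral, substituting back $q_i=u_i+c_i/r$, and assuming (as in the proof of Theorem~\ref{thm3.1}) the requisite smoothness of $\Psi_{\rm max}$, one recovers the integral-differential equation (\ref{thm3.1-0}).

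For the boundary condition, put $D(t):=\frac{U_1(t)}{\delta_1}-\frac{U_2(t)}{\delta_2}$. A claim of size $\sigma_k$ lowers both $U_1/\delta_1$ and $U_2/\delta_2$ by the same amount $\sigma_k$, so $D$ has no jumps, and (\ref{rem2.1-b}) gives $dD(t)=rD(t)\,dt+\big(\frac{c_1}{\delta_1}-\frac{c_2}{\delta_2}\big)\,dt$, whence $D(t)=e^{rt}D(0)+\frac{c_1/\delta_1-c_2/\delta_2}{r}(e^{rt}-1)$. On the diagonal $u_1/\delta_1=u_2/\delta_2$ we have $D(0)=0$, so $D(t)>0$ for every $t>0$ because $c_1/\delta_1>c_2/\delta_2$; consequently, for $t>0$, $U_1(t)<0$ forces $U_2(t)<\frac{\delta_2}{\delta_1}U_1(t)<0$, so that $\max\{U_1(t),U_2(t)\}<0$ if and only if $U_1(t)<0$, and therefore $T_{\rm max}(u_1,\frac{\delta_2}{\delta_1}u_1)=\tau_1(u_1)$ almost surely. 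Taking Laplace transforms yields (\ref{thm2.2-a}); alternatively this is the degeneration $\psi_{\rm max}=\psi_1$ on $\{u_1/\delta_1\ge u_2/\delta_2\}$ established in \cite{APP08}, read at the level of Laplace transforms. Uniqueness is then copied verbatim from Theorem~\ref{thm3.1}: the same operator $\mathcal{T}$, now with $\Psi_{\rm max}$ as its fixed point, obeys the contraction estimate (\ref{thm3.1-j}) with modulus $\frac{\lambda+se^{-(\lambda+s)h}}{\lambda+s}<1$, and Banach's fixed point theorem on bounded functions gives uniqueness of the solution of (\ref{thm3.1-0})--(\ref{thm2.2-a}).

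The step I expect to demand the most care is the first-claim bookkeeping on $\{T_1\le h\}$: one must check that the decomposition and the absorption trick still go through once $\vec{U}$ is permitted to have a negative coordinate, and in particular that $\Psi_{\rm max}\equiv 1$ on precisely the region appended to the $z$-integral, namely the set where both coordinates are negative, which is exactly the complement of the ``no ruin at the jump'' event. Everything else --- the passage to the limit, the boundary identification via the jump-free and strictly increasing gap $D(t)$, and the contraction argument --- is a direct transcription of the proof of Theorem~\ref{thm3.1}.
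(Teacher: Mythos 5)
Your proof is correct and follows essentially the same route as the paper, which simply lists the three modifications to the proof of Theorem \ref{thm3.1} (replacing $\wedge$ by $\vee$ in the first-claim decomposition, noting that ruin occurs at $T_1$ exactly when $\sigma_1$ exceeds the maximum of the two thresholds, and using $\Psi_{\rm max}=1$ precisely when both arguments are negative to absorb the last term into the extended $z$-integral). Your additional observations --- that the restart state after the first claim may carry one negative coordinate, and the explicit derivation of the boundary condition via the jump-free, strictly increasing gap $D(t)$ --- merely fill in details the paper leaves implicit and do not change the argument.
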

\begin{proof}
The proof is almost  the same with that of Theorem \ref{thm3.1}, and we need only to notice the following three things:
\begin{itemize}
\item[(1)] In this case, (\ref{thm3.1-c}) becomes
\begin{eqnarray*}
&&E[e^{-s \tau(u_1,u_2)},T_1 \leq h]\nonumber\\
&&=E\left[e^{-s
\tau(u_1,u_2)},T_1 \leq h, \sigma_1 \leq \frac{e^{r T_1} u_1+\frac{c_1}{r} (e^{r T_1}-1)}{\delta_1}
\vee \frac{e^{r T_1} u_2+\frac{c_2}{r} (e^{r T_1}-1)}{\delta_2}\right]\nonumber\\
&&\ +E\left[e^{-s \tau},T_1 \leq h, \sigma_1 > \frac{e^{r T_1} u_1+\frac{c_1}{r} (e^{r T_1}-1)}{\delta_1}
\vee \frac{e^{r T_1} u_2+\frac{c_2}{r} (e^{r T_1}-1)}{\delta_2}\right],
\end{eqnarray*}
where $\tau(u_1,u_2)$ stands for $T_{\rm max}(u_1,u_2)$.

\item[(2)] $\tau(u_1,u_2)=T_1$, if $\sigma_1 > \frac{e^{r T_1} u_1+\frac{c_1}{r} (e^{r T_1}-1)}{\delta_1}
\vee \frac{e^{r T_1} u_2+\frac{c_2}{r} (e^{r T_1}-1)}{\delta_2}$.

\item[(3)] If $z > \frac{e^{r t} u_1+\frac{c_1}{r} (e^{r t}-1)}{\delta_1}
\vee \frac{e^{r t} u_2+\frac{c_2}{r} (e^{r t}-1)}{\delta_2}$, then
$\Psi_{\rm max}(e^{r t} u_1+\frac{c_1}{r} (e^{r t}-1)-\delta_1 z,e^{r t} u_2+\frac{c_2}{r} (e^{r t}-1)-\delta_2 z,s)=1$.
\end{itemize}
We omit the details.
\end{proof}

\section{Asymptotics for finite time ruin probabilities}
In this section, we consider the finite time ruin probability
associated with  $T_{\rm max}(u_1,u_2)$ and $T_{\rm min}(u_1,u_2)$. The original idea comes from \cite[Section 4]{LLT07}.

Define $X_i(t):=e^{-r t} U_i(t)/\delta_i,i=1,2$. Then
$(X_1(t),X_2(t))$ has the same ruin times and probabilities with
$(U_1(t),U_2(t))$. Denote $x_i:=\frac{u_i}{\delta_i},
p_i:=\frac{c_i}{r \delta_i}, i=1,2.$ Then by (\ref{model-2}) and our assumptions, we
have
\begin{eqnarray}
X_i(t)=x_i+p_i (1-e^{-r t})-\sum\limits_{k=1}^{N(t)} e^{-r \theta_k}
\sigma_k,\  i=1,2,
\end{eqnarray}
where $p_1>p_2, x_1\leq x_2$.

For $T>0$, define
$
\Psi_{\rm max}(x_1,x_2,T):=P\{T_{\rm max}(\delta_1 x_1,\delta_2 x_2)\leq T\}.
$ Then  we have
\begin{eqnarray}\label{4-1}
\Psi_{\rm max}(x_1,x_2,T)=P\{\exists t\leq  T\ s.t.\ X_1(t)<0\
\mbox{and}\ X_2(t)<0\}.
\end{eqnarray}

Alternatively, we can also define $
\Psi_{\rm min}(x_1,x_2,T):=P\{T_{\rm min}(\delta_1 x_1,\delta_2 x_2)\leq T\}
$ and get
\begin{eqnarray}\label{4-1b}
\Psi_{\rm min}(x_1,x_2,T)=P\{\exists t\leq  T\ s.t.\ X_1(t)<0\
\mbox{or}\ X_2(t)<0\}.
\end{eqnarray}

In the following, we will provide asymptotic results on both $\Psi_{\rm max}(x_1,x_2,T)$ and $\Psi_{\rm min}(x_1,x_2,T)$ under some condition.

\subsection{Asymptotic result about $T_{\rm max}(u_1,u_2)$}

Let $T>0$, $n\in \mathbf{N}$, and $\{V_k,k=1,2,\ldots,n\}$ be a
sequence of i.i.d. random variables with the uniform distribution on
$(0,T]$. Denote by $(V_1^*,\ldots,V_n^*)$ the ordered statistic of
$(V_1,...,V_n)$. It's well known that conditioning on $\{N(t)=n\}$,
the random vectors $(\theta_1,...,\theta_n)$ and
$(V_1^*,\ldots,V_n^*)$ have the same distribution. Assume that
$\{V_k,k=1,2,\ldots,n\}$ is independent of $\{\sigma_k,k\geq 1\}$.
Define $F_T(x)=P[e^{-r V_1} \sigma_1 \leq x]$. Then we have
\begin{eqnarray}\label{4-2}
&&P\left\{\left.\sum\limits_{k=1}^{n} e^{-r \theta_k} \sigma_k >x
\right|N(T)=n\right\}=P\left\{\sum\limits_{k=1}^{n} e^{-r V_k^*}
\sigma_k >
x\right\}\nonumber\\
&&=P\left\{\sum\limits_{k=1}^{n} e^{-r V_k} \sigma_k >
x\right\}=\overline{{F_T}^{*n}}(x),
\end{eqnarray}
where ${F_T}^{*n}(x)$ stands for the $n$-multiple convolution of
$F_T(x)$.

\begin{thm}\label{thm4.1}
If $\sigma_k$ has a regularly varying tail with
$P\{\sigma_k>x\}=L(x)/x^{\alpha}$, where $L$ is continuous, slowly
varying, $\lim\limits_{x\to\infty}L(x)=\infty$, and $\alpha>0$. Then for any
$T>0$, we have
\begin{eqnarray}\label{thm4.2-0}
 \lim\limits_{x_2\geq x_1\rightarrow
\infty}\frac{\Psi_{\rm max}(x_1,x_2,T)}{\lambda T \overline{{F_T}}(x_2)}
= 1.
\end{eqnarray}
\end{thm}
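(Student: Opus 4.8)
The plan is to sandwich $\Psi_{\rm max}(x_1,x_2,T)$ between two probabilities, each asymptotically equivalent to $\lambda T\,\overline{F_T}(x_2)$, exploiting the principle of a single big jump: in the $T_{\rm max}$ problem both components $X_1,X_2$ must be negative at the \emph{same} time, and in the regularly varying regime the cheapest way this can happen by time $T$ is that one claim $e^{-r\theta_k}\sigma_k$ overshoots the larger of the two (essentially constant) barriers $x_i+p_i(1-e^{-rt})$, $i=1,2$; since $x_1\le x_2$ the binding barrier is of order $x_2$.

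First I would record two auxiliary facts. (i) From $\overline{F_T}(x)=\frac1T\int_0^T\overline F(xe^{rv})\,dv$ and the uniform convergence theorem for regularly varying functions ($\overline F(xe^{rv})/\overline F(x)\to e^{-\alpha rv}$ uniformly for $v\in[0,T]$), one gets $\overline{F_T}(x)\sim\overline F(x)\,\frac1T\int_0^Te^{-\alpha rv}\,dv$; hence $\overline{F_T}$ is regularly varying with index $-\alpha$, in particular subexponential, and $\overline{F_T}(x+c)\sim\overline{F_T}(x)$ for every fixed $c$. (ii) Writing $W(t):=\sum_{k=1}^{N(t)}e^{-r\theta_k}\sigma_k$, the finite-horizon tail identity $P\{W(T)>x\}\sim\lambda T\,\overline{F_T}(x)$ holds: by (\ref{4-2}), $P\{W(T)>x\}=\sum_{n\ge 0}e^{-\lambda T}\frac{(\lambda T)^n}{n!}\,\overline{F_T^{*n}}(x)$, where $\overline{F_T^{*n}}(x)/\overline{F_T}(x)\to n$ by subexponentiality while $\overline{F_T^{*n}}(x)/\overline{F_T}(x)\le K(1+\varepsilon)^n$ by Kesten's bound; since the Poisson law has finite exponential moments, dominated convergence in $n$ gives the identity. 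The same order-statistics argument behind (\ref{4-2}), applied to a maximum instead of a sum, also yields $P\{\max_{k\le N(T)}e^{-r\theta_k}\sigma_k>y\}=E\big[1-(1-\overline{F_T}(y))^{N(T)}\big]\sim\lambda T\,\overline{F_T}(y)$ as $y\to\infty$, the integrand being dominated by $N(T)$ via Bernoulli's inequality.

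For the upper bound, $T_{\rm max}$-ruin requires $X_2(t)<0$ for some $t\le T$, and $X_2(t)<0\iff W(t)>x_2+p_2(1-e^{-rt})\ge x_2$ with $W$ nondecreasing, so $\{T_{\rm max}(\delta_1x_1,\delta_2x_2)\le T\}\subseteq\{W(T)>x_2\}$ and thus $\Psi_{\rm max}(x_1,x_2,T)\le P\{W(T)>x_2\}\sim\lambda T\,\overline{F_T}(x_2)$; this bound does not involve $x_1$, so $\limsup_{x_2\ge x_1\to\infty}\Psi_{\rm max}(x_1,x_2,T)/(\lambda T\,\overline{F_T}(x_2))\le 1$. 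For the lower bound, if $e^{-r\theta_k}\sigma_k>x_2+p_1$ for some $k\le N(T)$ then, using $x_1\le x_2$, $p_2<p_1$ and $1-e^{-rt}\le 1$, one has $W(\theta_k)\ge e^{-r\theta_k}\sigma_k>x_i+p_i(1-e^{-r\theta_k})$ for \emph{both} $i$, hence $X_1(\theta_k)<0$ and $X_2(\theta_k)<0$ and $T_{\rm max}(\delta_1x_1,\delta_2x_2)\le\theta_k\le T$; therefore $\Psi_{\rm max}(x_1,x_2,T)\ge P\{\max_{k\le N(T)}e^{-r\theta_k}\sigma_k>x_2+p_1\}\sim\lambda T\,\overline{F_T}(x_2+p_1)\sim\lambda T\,\overline{F_T}(x_2)$, again uniformly in $x_1\le x_2$, so $\liminf\ge 1$. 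Combining the two estimates gives (\ref{thm4.2-0}).

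The step I expect to be the main obstacle is fact (ii), the finite-horizon tail asymptotic $P\{W(T)>x\}\sim\lambda T\,\overline{F_T}(x)$: the delicate point is the uniform-in-$n$ domination needed to pass the limit $x\to\infty$ through the infinite Poisson mixture, which relies on Kesten's bound for subexponential distributions together with the preliminary verification that $\overline{F_T}$ (and not merely $\overline F$) is regularly varying. The remaining ingredients are elementary: monotonicity of $W$, the crude two-sided barrier estimates $\min_i x_i\le x_i+p_i(1-e^{-rt})\le x_2+p_1$, Bernoulli's inequality, and the uniform convergence theorem for regularly varying functions.
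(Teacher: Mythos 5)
Your proof is correct, and while the overall sandwich strategy and the upper bound ($\Psi_{\rm max}\le P\{W(T)>x_2\}$, then compound-Poisson subexponential asymptotics for $F_T$) coincide with the paper's, you diverge in two worthwhile ways. First, your lower bound uses a genuine single-big-jump event for the \emph{maximum} of the discounted claims over the uniform barrier $x_2+p_1$, giving $\Psi_{\rm max}\ge E[1-(1-\overline{F_T}(x_2+p_1))^{N(T)}]\sim\lambda T\,\overline{F_T}(x_2)$ by Bernoulli's inequality and long-tailedness alone; the paper instead lower-bounds by $P\{W(T)>\max_i(x_i+p_i(1-e^{-rT}))\}$ and has to split into two cases according to which barrier is larger. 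Your route is shorter and needs less than full subexponentiality on that side (one should just record that replacing the ordered $V_k^*$ by the unordered $V_k$ leaves the law of the collection of products unchanged, so the maximum is that of $n$ i.i.d.\ $F_T$-variables). Second, and more substantively, your explicit appeal to Kesten's bound to dominate $\overline{F_T^{*n}}(x)/\overline{F_T}(x)$ uniformly in $n$ repairs a real gap in the paper: in (\ref{thm4.2-i}) the authors invoke Fatou's lemma to pass a $\limsup$ \emph{inside} the infinite Poisson sum, but Fatou only gives the $\liminf\ge$ direction; the $\limsup\le$ interchange requires exactly the dominated-convergence argument you supply. Your derivation of the regular variation of $\overline{F_T}$ via the uniform convergence theorem (with the explicit constant $\frac1T\int_0^Te^{-\alpha rv}dv$) is also cleaner than the paper's L'H\^opital computation in Lemma \ref{lem3.3}.
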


Before proving Theorem \ref{thm4.1},  we need one lemma.


\begin{lem}\label{lem3.3}
Suppose that  $\sigma_k$ satisfies the condition in Theorem \ref{thm4.1}. Then $F_T$
has a regularly varying tail.
\end{lem}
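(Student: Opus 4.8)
The plan is to show that the distribution function $F_T(x) = P[e^{-rV_1}\sigma_1 \leq x]$ inherits regular variation from $\sigma_1$ by conditioning on the value of the multiplicative factor $e^{-rV_1}$. Since $V_1$ is uniform on $(0,T]$, the factor $e^{-rV_1}$ takes values in the bounded interval $[e^{-rT},1)$, so we are essentially mixing the law of $\sigma_1$ over a compact range of positive scaling constants. Concretely, I would write
\begin{eqnarray*}
\overline{F_T}(x) = P[e^{-rV_1}\sigma_1 > x] = \frac{1}{T}\int_0^T P\left[\sigma_1 > x e^{rv}\right]dv = \frac{1}{T}\int_0^T \overline{F}(x e^{rv})\,dv,
\end{eqnarray*}
and then substitute the hypothesis $\overline{F}(y) = L(y)/y^\alpha$ to get $\overline{F_T}(x) = \frac{1}{T}\int_0^T \frac{L(x e^{rv})}{x^\alpha e^{r\alpha v}}\,dv$.

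Next I would pull out the factor $x^{-\alpha}$ and argue that the remaining integral, divided by $L(x)$, converges as $x\to\infty$ to the finite positive constant $\frac{1}{T}\int_0^T e^{-r\alpha v}\,dv$. This is the place where the slow-variation property $L(xe^{rv})/L(x)\to 1$ is used; because $v$ ranges over the fixed compact interval $[0,T]$, the convergence is uniform in $v$ there (this is the uniform convergence theorem for slowly varying functions, applied on a compact set of scale factors), which justifies taking the limit inside the integral by dominated convergence. Hence
\begin{eqnarray*}
\lim_{x\to\infty}\frac{\overline{F_T}(x)}{L(x)/x^\alpha} = \frac{1}{T}\int_0^T e^{-r\alpha v}\,dv = \frac{1-e^{-r\alpha T}}{r\alpha T} \in (0,\infty),
\end{eqnarray*}
so $\overline{F_T}(x) \sim c_T\, x^{-\alpha} L(x)$ with $c_T := (1-e^{-r\alpha T})/(r\alpha T)$, i.e.\ $F_T$ has a regularly varying tail of index $-\alpha$, and indeed the slowly varying part $c_T L(x)$ also tends to infinity. (If $r=0$ the statement is trivial since $F_T = F$; the argument above is for $r>0$.)

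The main obstacle — really the only subtlety — is justifying the interchange of limit and integral, i.e.\ establishing that $L(xe^{rv})/L(x)$ converges to $1$ uniformly for $v\in[0,T]$ and is dominated by an integrable function of $v$ uniformly in large $x$. Both follow from the standard representation/uniform-convergence theorems for slowly varying functions (e.g.\ Bingham–Goldie–Teugels, or the references in Asmussen \cite{As00} / Rolski et al.\ \cite{Ro99} already cited): on any compact subset of $(0,\infty)$ the ratio $L(\lambda x)/L(x)\to 1$ uniformly in $\lambda$, and the Potter bounds give $L(xe^{rv})/L(x)\leq C e^{\varepsilon r v}$ for all large $x$, which is bounded on $[0,T]$. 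Everything else is a direct computation, so I would keep that part brief and concentrate the write-up on citing the uniform convergence theorem and checking the domination.
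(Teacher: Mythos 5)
Your proof is correct, but it takes a genuinely different route from the paper's. The paper writes $\overline{F_T}(x)=S(x)/x^{\alpha}$ with $S(x)=\frac{1}{T}\int_0^T L(e^{ry}x)e^{-r\alpha y}\,dy$, converts $S$ by the substitution $u=e^{ry}x$ into the Karamata-type integral $S(x)=\frac{x^{\alpha}}{rT}\int_x^{e^{rT}x}\frac{L(u)}{u^{\alpha+1}}\,du$, and then verifies slow variation of $S$ directly by computing $\lim_{x\to\infty}S(tx)/S(x)=1$ via L'H\^opital's rule (this is where the continuity of $L$ and $\lim_{x\to\infty}L(x)=\infty$ enter). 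That argument is self-contained but only establishes that $F_T$ is regularly varying of index $-\alpha$, without identifying the slowly varying factor. You instead keep the integral in the form $\frac{1}{T}\int_0^T\frac{L(xe^{rv})}{L(x)}e^{-r\alpha v}\,dv$ and invoke the uniform convergence theorem for slowly varying functions on the compact range $[1,e^{rT}]$ of scale factors (with Potter bounds for domination) to pass the limit inside. This buys you a strictly sharper conclusion, the explicit equivalence $\overline{F_T}(x)\sim c_T\,\overline{F}(x)$ with $c_T=(1-e^{-r\alpha T})/(r\alpha T)$, which relates $\overline{F_T}$ back to the claim-size tail and would in fact be useful in the asymptotic theorems of Section 3; the cost is an appeal to the standard (but external) uniform convergence theorem, whereas the paper's computation uses nothing beyond the definition of slow variation, at the price of a somewhat delicate L'H\^opital step. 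Both arguments are valid under the stated hypotheses.
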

\begin{proof} By the independence of $V_1$ and $\sigma_1$, we have
\begin{eqnarray*}
\overline{F_T}(x)&=&P\{e^{-rV_1}\sigma_1>x\}\\
&=&\int_0^TP\{e^{-ry}\sigma_1>x\}\frac{1}{T}dy\\
&=&\frac{1}{T}\int_0^T\frac{L(e^{ry}x)}{(e^{ry}x)^{\alpha}}dy:=\frac{S(x)}{x^{\alpha}},
\end{eqnarray*}
where
$
S(x)=\frac{1}{T}\int_0^T\frac{L(e^{ry}x)}{(e^{ry})^{\alpha}}dy,
$
which together with the assumption that $L$ is continuous and $\lim\limits_{x\to\infty}L(x)=\infty$
implies that
\begin{eqnarray}\label{thm3.3-a}
\lim_{x\to\infty}S(x)=\infty.
\end{eqnarray}
By the change of variable, we get that
\begin{eqnarray}\label{thm3.3-b}
S(x)=\frac{x^{\alpha}}{rT}\int_x^{e^{rT}x}\frac{L(u)}{u^{\alpha+1}}du.
\end{eqnarray}
For any $t>0$, by (\ref{thm3.3-a}), (\ref{thm3.3-b}) and the fact that $L$ is a slowing varying function,  we obtain
\begin{eqnarray*}
&&\lim_{x\to\infty}\frac{S(tx)}{S(x)}=
\lim_{x\to\infty}\frac{t^\alpha\int_{tx}^{e^{rT}tx}\frac{L(u)}{u^{\alpha+1}}du}
{\int_x^{e^{rT}x}\frac{L(u)}{u^{\alpha+1}}du}=\lim_{x\to\infty}\frac{t^{\alpha}\left(\frac{L(e^{rT}tx)}{(e^{rT}tx)^{\alpha+1}}e^{rT}t-
\frac{L(tx)}{(tx)^{\alpha+1}}t\right)
}{\frac{L(e^{rT}x)}{(e^{rT}x)^{\alpha+1}}e^{rT}-
\frac{L(x)}{x^{\alpha+1}}}\\
&&=\lim_{x\to\infty}\frac{\frac{L(e^{rT}tx)}{(e^{rT})^{\alpha}}-
L(tx) }{\frac{L(e^{rT}x)}{(e^{rT})^{\alpha}}-L(x)}=\lim_{x\to\infty}\frac{\frac{L(e^{rT}tx)}{L(e^{rT}x)}-(e^{rT})^{\alpha}\frac{L(tx)}{L(e^{rT}x)}}
{1-(e^{rT})^{\alpha}\frac{L(x)}{L(e^{rT}x)}}\\
&&=\frac{1-(e^{rT})^{\alpha}}{1-(e^{rT})^{\alpha}}=1.
\end{eqnarray*}
Hence $F_T$ has a regularly varying tail.
\end{proof}

\noindent{\bf Proof of Theorem \ref{thm4.1}.}
By Lemma \ref{lem3.3}
and \cite[Proposition IX.1.4]{As00}, we know that $F_T$  is a
subexponential distribution.  By (\ref{4-1}) and (\ref{4-2}), we
have
\begin{eqnarray}\label{thm4.2-a}
&&\Psi_{\rm max}(x_1,x_2,T)\nonumber\\
&&=P\left\{\sum\limits_{k=1}^{N(t)} e^{-r \theta_k} \sigma_k>
x_i+p_i (1-e^{-r t}),i=1,2,\ \mbox{for some}\
t \leq T\right\}\nonumber\\
&&\geq P\left\{\sum\limits_{k=1}^{N(T)} e^{-r \theta_k} \sigma_k >
x_i+p_i
(1-e^{-rT}),i=1,2\right\}\nonumber\\
&&=\sum\limits_{n=0}^{\infty} P\{N(T)=n\}\nonumber\\
&&\quad\times P\left\{\left.\sum\limits_{k=1}^{n} e^{-r \theta_k} \sigma_k >
x_i+p_i (1-e^{-r T}),i=1,2\right|N(T)=n\right\}.
\end{eqnarray}

If $x_1+p_1 (1-e^{-r T}) \geq x_2+p_2 (1-e^{-r T})$, then by
(\ref{4-2}) and the assumption that $x_2\geq x_1$, we obtain
\begin{eqnarray}\label{thm4.2-b}
&&P\left\{\left.\sum\limits_{k=1}^{n} e^{-r \theta_k} \sigma_k >
x_i+p_i (1-e^{-r T}),i=1,2\right|N(T)=n\right\}\nonumber\\
&&=P\left\{\left.\sum\limits_{k=1}^{n} e^{-r \theta_k} \sigma_k >
x_1+p_1 (1-e^{-r T})\right|N(T)=n\right\}\nonumber\\
&&=\overline{{F_T}^{*n}}(x_1+p_1 (1-e^{-r T})),
\end{eqnarray}
and $x_2+p_1  (1-e^{-r T})\geq x_1+p_1  (1-e^{-r T})\geq x_2+p_2(1-e^{-r T})>x_2$, which implies that
\begin{eqnarray*}
\overline{{F_T}^{*n}}(x_ 2+p_1  (1-e^{-r T})) \leq \overline{{F_T}^{*n}}(x_1+p_1 (1-e^{-r T})) \leq
\overline{{F_T}^{*n}}(x_2),
\end{eqnarray*}
and thus
\begin{eqnarray}\label{thm4.2-c}
\frac{\overline{{F_T}^{*n}}(x_2+p_1 (1-e^{-r
T}))}{\overline{{F_T}^{*n}}(x_2)}\leq \frac{\overline{{F_T}^{*n}}(x_1+p_1 (1-e^{-r
T}))}{\overline{{F_T}^{*n}}(x_2)} \leq 1.
\end{eqnarray}
Since $F_T$  is a
subexponential distribution, by \cite[Proposition IX.1.5]{As00} and (\ref{thm4.2-c}), it holds that
\begin{eqnarray}\label{thm4.2-d}
\liminf\limits_{x_1 \rightarrow \infty}
\frac{\overline{{F_T}^{*n}}(x_1+p_1 (1-e^{-r
T}))}{\overline{{F_T}^{*n}}(x_2)}=1.
\end{eqnarray}
By  Fatou's Lemma, (\ref{thm4.2-d}) and   \cite[Proposition IX.1.7]{As00}, we have
\begin{eqnarray}\label{thm4.2-e}
&&\liminf\limits_{x_1\rightarrow
\infty}\frac{\sum\limits_{n=0}^{\infty} P\{N(T)=n\}
\overline{{F_T}^{*n}}(x_1+p_1 (1-e^{-r T}))}{\lambda T \overline{{F_T}}(x_2)}\nonumber\\
&&=\liminf\limits_{x_1 \rightarrow
\infty}\sum\limits_{n=0}^{\infty} P\{N(T)=n\} \frac{\overline{{F_T}^{*n}}(x_1+p_1 (1-e^{-r T}))}{\overline{{F_T}^{*n}}(x_2)} \frac{\overline{{F_T}^{*n}}(x_2)}{\lambda T \overline{{F_T}}(x_2)}\nonumber\\
&& \geq \frac{1}{\lambda T}\sum\limits_{n=0}^{\infty} P\{N(T)=n\} \liminf\limits_{x_1 \rightarrow \infty} \frac{\overline{{F_T}^{*n}}(x_1+p_1 (1-e^{-r T}))}{\overline{{F_T}^{*n}}(x_2)}\liminf\limits_{x_1 \rightarrow \infty}\frac{\overline{{F_T}^{*n}}(x_2)}{\overline{{F_T}}(x_2)}\nonumber\\
&&=\frac{1}{\lambda T}\sum\limits_{n=0}^{\infty} P\{N(T)=n\} \liminf\limits_{x_1 \rightarrow \infty} \frac{\overline{{F_T}^{*n}}(x_2)}{\overline{{F_T}}(x_2)}\nonumber\\
&&= \frac{1}{\lambda T}\sum\limits_{n=0}^{\infty} P\{N(T)=n\} n\nonumber\\
&&=\frac{1}{\lambda T}E[N(t)]=1.
\end{eqnarray}
By (\ref{thm4.2-a}), (\ref{thm4.2-b}) and (\ref{thm4.2-e}) and under the condition that $x_1+p_1 (1-e^{-r T}) \geq x_2+p_2 (1-e^{-r T})$, we have that
\begin{eqnarray*}
\liminf\limits_{x_1\rightarrow
\infty}\frac{\Psi_{max}(x_1,x_2,T)}{\lambda T \overline{{F_T}}(x_2)}
\geq 1.
\end{eqnarray*}

If $x_1+p_1 (1-e^{-r T}) < x_2+p_2 (1-e^{-r T})$, then
\begin{eqnarray}\label{thm4.2-f}
&&P\left[\left.\sum\limits_{k=1}^{n} e^{-r \theta_k} \sigma_k > x_i+p_i
(1-e^{-r T}),i=1,2\right|N(T)=n\right]\nonumber\\
&&=P\left[\left.\sum\limits_{k=1}^{n} e^{-r \theta_k} \sigma_k > x_2+p_2
(1-e^{-r T})\right|N(T)=n\right]\nonumber\\
&&=\overline{{F_T}^{*n}}(x_2+p_2 (1-e^{-r T})).
\end{eqnarray}
Since $F_T$  is a
subexponential distribution and $x_2\geq x_1$, by \cite[Proposition IX.1.5]{As00} we have
\begin{eqnarray}\label{thm4.2-g}
\lim_{x_1\to\infty}\frac{\overline{{F_T}^{*n}}(x_2+p_2 (1-e^{-r
T}))}{\overline{{F_T}^{*n}}(x_2)}=1.
\end{eqnarray}
Now  By (\ref{thm4.2-a}), (\ref{thm4.2-f}) and (\ref{thm4.2-g}), similar to the arguments in (\ref{thm4.2-e}), we obtain that  under the condition that $x_1+p_1 (1-e^{-r T})< x_2+p_2 (1-e^{-r T})$
\begin{eqnarray*}
\liminf\limits_{x_1\rightarrow
\infty}\frac{\Psi_{max}(x_1,x_2,T)}{\lambda T \overline{{F_T}}(x_2)}
\geq 1.
\end{eqnarray*}
Hence we always have
\begin{eqnarray}\label{thm4.2-h}
\liminf\limits_{x_1\rightarrow
\infty}\frac{\Psi_{max}(x_1,x_2,T)}{\lambda T \overline{{F_T}}(x_2)}
\geq 1.
\end{eqnarray}

On the other hand, by the assumption that $x_2\geq x_1$, and (\ref{4-2}), we have
\begin{eqnarray*}
&&\Psi_{\rm max}(x_1,x_2,T)\\
&&=P\left\{\sum\limits_{k=1}^{N(t)} e^{-r \theta_k} \sigma_k>
x_i+p_i (1-e^{-r t}),i=1,2,\ \mbox{for some}\
t \leq T\right\}\\
&&\leq P\left\{\sum\limits_{k=1}^{N(T)} e^{-r \theta_k} \sigma_k >
x_2\right\}\\
&&=\sum\limits_{n=0}^{\infty} P\{N(T)=n\}
P\left\{\left.\sum\limits_{k=1}^{n} e^{-r \theta_k} \sigma_k >
x_2\right|N(T)=n\right\}\\
&&=\sum\limits_{n=0}^{\infty} P\{N(T)=n\}\overline{{F_T}^{*n}}(x_2).
\end{eqnarray*}
By Fatou's Lemma, the above formula and \cite[Proposition IX.1.7]{As00}, we have
\begin{eqnarray}\label{thm4.2-i}
&&\limsup\limits_{x_1\rightarrow
\infty}\frac{\Psi_{max}(x_1,x_2,T)}{\lambda T \overline{{F_T}}(x_2)}\nonumber\\
&&\leq\limsup\limits_{x_1\rightarrow
\infty}\frac{\sum\limits_{n=0}^{\infty} P\{N(T)=n\}
\overline{{F_T}^{*n}}(x_2)}{\lambda T \overline{{F_T}}(x_2)}\nonumber\\
&& \leq \frac{1}{\lambda T}\sum\limits_{n=0}^{\infty} P\{N(T)=n\} \limsup\limits_{x_1 \rightarrow \infty} \frac{\overline{{F_T}^{*n}}(x_2)}{\overline{{F_T}}(x_2)}\nonumber\\
&&\leq \frac{1}{\lambda T}\sum\limits_{n=0}^{\infty} P\{N(T)=n\} n\nonumber\\
&&=\frac{1}{\lambda T}E[N(t)]=1.
\end{eqnarray}
It follows from (\ref{thm4.2-h}) and (\ref{thm4.2-i}) that (\ref{thm4.2-0}) holds.

\subsection{Asymptotic result about $T_{\rm min}(u_1,u_2)$}

By Theorem \ref{thm4.1} we can easily obtain the asymptotic result for $\Psi_{\rm min}(x_1,x_2,T)$, which is formulated as follow:

\begin{thm}\label{cor4.1}
If $\sigma_k$ has a regularly varying tail with
$P\{\sigma_k>x\}=L(x)/x^{\alpha}$, where $L$ is continuous, slowly
varying, $\lim\limits_{x\to\infty}L(x)=\infty$, and $\alpha>0$. Then for any
$T>0$, we have
\begin{eqnarray}\label{thm4.2-00}
\lim\limits_{x_2\geq x_1\rightarrow
\infty}\frac{\Psi_{\rm min}(x_1,x_2,T)}{\lambda T \overline{{F_T}}(x_1)}
= 1.
\end{eqnarray}
\end{thm}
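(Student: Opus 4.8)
The plan is to exploit the elementary set-inclusion relating the two joint ruin events and then to sandwich $\Psi_{\rm min}(x_1,x_2,T)$ between two quantities each of which is asymptotically $\lambda T\,\overline{F_T}(x_1)$. The starting observation is that, since by assumption $x_1\le x_2$ and $p_1>p_2$, the event $\{X_1(t)<0\}$ is ``more likely'' than $\{X_2(t)<0\}$, so among the two marginal ruin events the first one dominates. Concretely, from (\ref{4-1b}),
\[
\Psi_{\rm min}(x_1,x_2,T)=P\{\exists\,t\le T:\ X_1(t)<0\ \text{or}\ X_2(t)<0\}.
\]
For the lower bound, simply drop the ``or'' and keep only the first marginal event:
\[
\Psi_{\rm min}(x_1,x_2,T)\ \ge\ P\{\exists\,t\le T:\ X_1(t)<0\}\ \ge\ P\Big\{\textstyle\sum_{k=1}^{N(T)}e^{-r\theta_k}\sigma_k>x_1+p_1(1-e^{-rT})\Big\}.
\]
Conditioning on $N(T)=n$ and using (\ref{4-2}) turns the right-hand side into $\sum_n P\{N(T)=n\}\,\overline{F_T^{*n}}(x_1+p_1(1-e^{-rT}))$; since $F_T$ is subexponential (Lemma \ref{lem3.3} together with \cite[Proposition IX.1.4, IX.1.5]{As00}), the same Fatou's-lemma argument used to establish (\ref{thm4.2-e}) gives $\liminf_{x_1\to\infty}\Psi_{\rm min}(x_1,x_2,T)/(\lambda T\,\overline{F_T}(x_1))\ge 1$.

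For the upper bound, use the union bound
\[
\Psi_{\rm min}(x_1,x_2,T)\ \le\ P\{\exists\,t\le T:\ X_1(t)<0\}\ +\ P\{\exists\,t\le T:\ X_2(t)<0\},
\]
and bound each term by $P\{\sum_{k=1}^{N(T)}e^{-r\theta_k}\sigma_k>x_i\}=\sum_n P\{N(T)=n\}\,\overline{F_T^{*n}}(x_i)$ for $i=1,2$ respectively, as in the derivation of (\ref{thm4.2-i}). Applying \cite[Proposition IX.1.7]{As00} to each, we get $\limsup_{x_1\to\infty}$ of the first term over $\lambda T\,\overline{F_T}(x_1)$ bounded by $1$, and of the second term over $\lambda T\,\overline{F_T}(x_1)$ bounded by $\limsup_{x_1\to\infty}\overline{F_T}(x_2)/\overline{F_T}(x_1)$. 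Here is where the regular-variation (rather than mere subexponentiality) hypothesis is needed: since $x_1\le x_2$ and $\overline{F_T}$ is regularly varying (Lemma \ref{lem3.3}), $\overline{F_T}(x_2)/\overline{F_T}(x_1)\le 1$, so the second term is asymptotically negligible compared with the first (it contributes at most $1$, but in fact is dominated — the key point being that the ratio stays bounded and the denominator is the \emph{larger} tail $\overline{F_T}(x_1)$). Combining, $\limsup_{x_1\to\infty}\Psi_{\rm min}(x_1,x_2,T)/(\lambda T\,\overline{F_T}(x_1))\le 1$.

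Putting the two bounds together yields (\ref{thm4.2-00}). The only genuinely delicate point is the treatment of the $X_2$-contribution in the upper bound: one must make sure that dividing by $\overline{F_T}(x_1)$ — the tail evaluated at the \emph{smaller} argument — the term coming from the second coordinate does not blow up, which is exactly what $x_1\le x_2$ plus monotonicity of $\overline{F_T}$ guarantees, and where the precise asymptotic constant $1$ (rather than $2$) comes from is that along the limit $x_2\ge x_1\to\infty$ with $x_2$ free, the worst case is $x_2\sim x_1$, in which case both marginal terms contribute $1$ each and one needs the sharper observation that the ruin of $(X_1,X_2)$ ``or''-style is driven by a single big jump exceeding $x_1+p_1(1-e^{-rT})$, so the overlap is not double-counted; this is handled cleanly by instead noting $\{X_1(t)<0\ \text{or}\ X_2(t)<0\}\subseteq\{\sum_{k\le N(T)}e^{-r\theta_k}\sigma_k>x_1\}$ directly (since $x_1\le x_2$ forces the $X_1$-threshold to be the smaller one), giving the single clean upper bound $\sum_n P\{N(T)=n\}\overline{F_T^{*n}}(x_1)$ and hence the constant $1$ without any union bound at all. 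I would in fact present the upper bound this way — it is shorter and mirrors (\ref{thm4.2-i}) exactly with $x_2$ replaced by $x_1$.
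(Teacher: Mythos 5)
Your final argument is correct, but it takes a genuinely different route from the paper's. The paper obtains the asymptotics for $\Psi_{\rm min}$ as a quick corollary of Theorem \ref{thm4.1}: it writes the inclusion--exclusion identity $\Psi_{\rm max}(x_1,x_2,T)=\psi_1(x_1,T)+\psi_2(x_2,T)-\Psi_{\rm min}(x_1,x_2,T)$, invokes the marginal asymptotics $\psi_i(x_i,T)\sim\lambda T\,\overline{F_T}(x_i)$ and the already-proved $\Psi_{\rm max}(x_1,x_2,T)\sim\lambda T\,\overline{F_T}(x_2)$, and observes that the difference $\psi_2(x_2,T)-\Psi_{\rm max}(x_1,x_2,T)$ is $o(\overline{F_T}(x_2))=o(\overline{F_T}(x_1))$ since $\overline{F_T}(x_2)\le\overline{F_T}(x_1)$ --- this cancellation is precisely what removes the double-counting that worried you. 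You instead sandwich $\Psi_{\rm min}$ directly: from below by $P\{X_1(T)<0\}=P\{\sum_{k\le N(T)}e^{-r\theta_k}\sigma_k>x_1+p_1(1-e^{-rT})\}$, and from above by $P\{\sum_{k\le N(T)}e^{-r\theta_k}\sigma_k>x_1\}$ (valid because $x_1\le x_2$ makes $x_1$ the smaller of the two thresholds and the claims are nonnegative), then run the same Fatou/Kesten-bound machinery as in (\ref{thm4.2-e}) and (\ref{thm4.2-i}). Your route is self-contained --- it never uses Theorem \ref{thm4.1}, and is in fact easier than the proof of that theorem because the ``or'' event requires no case split on which threshold dominates --- whereas the paper's is a two-line deduction once Theorem \ref{thm4.1} is available. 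Two small corrections to your write-up: the intermediate union-bound passage is a dead end (as you yourself note, it can only deliver the constant $2$ when $x_2\sim x_1$, and the claim that the $X_2$-term is ``asymptotically negligible'' there is not justified), so only the inclusion-based upper bound should be kept; and the inequality $\overline{F_T}(x_2)\le\overline{F_T}(x_1)$ needs only monotonicity of the tail together with $x_1\le x_2$, not regular variation.
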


\begin{proof}
First, for $i=1,2$, define
\begin{eqnarray*}
\psi_i (x_i, T)= P\{\exists \ t\leq  T\ s.t.\ X_i(t)<0\},
\end{eqnarray*}
i.e. $\psi_i (x_i, T)(i=1,2)$ represents the ruin probability of $X_i(t)(i=1,2)$ within finite time $T$.

Notice the fact that
\begin{eqnarray*}\label{4-2}
&&P\{\exists t\leq  T\ s.t.\ X_1(t)<0\ \mbox{and}\ X_2(t)<0\} \nonumber\\
&&=P\{\exists t\leq  T\ s.t.\ X_1(t)<0\}+P\{\exists t\leq  T\ s.t.\ X_2(t)<0\}-P\{\exists t\leq  T\ s.t.\ X_1(t)<0\
\mbox{or}\ X_2(t)<0\}.
\end{eqnarray*}
Then by (\ref{4-1}) and (\ref{4-1b}), we have
\begin{eqnarray}\label{4-22}
\Psi_{\rm max}(x_1,x_2,T)=\psi_1 (x_1, T)+\psi_2 (x_2, T)-\Psi_{\rm min}(x_1,x_2,T).
\end{eqnarray}

By Lemma \ref{lem3.3},  $F_T$  is a subexponential distribution. Then by \cite[Proposition IX.1.5]{As00}, for $i=1,2$, we have
\begin{eqnarray}\label{4-1c}
 \lim\limits_{x_2\geq x_1\rightarrow
\infty}\frac{\psi_i(x_i,T)}{\lambda T \overline{{F_T}}(x_i)}
= 1.
\end{eqnarray}

 By  (\ref{4-22}), (\ref{4-1c}), (\ref{thm4.2-0}), and the fact that $x_2 \geq x_1$, we obtain that
\begin{eqnarray*}
&&\left|\frac{\Psi_{\rm min}(x_1,x_2,T)}{\lambda T \overline{{F_T}}(x_1)} -1\right|\\
&&=\left|\frac{\psi_1(x_1,T)-\lambda T \overline{{F_T}}(x_1)+\psi_2(x_2,T)-\Psi_{\rm max}(x_1,x_2,T)}{\lambda T \overline{{F_T}}(x_1)}\right|\\
&&\leq \left|\frac{\psi_1(x_1,T)-\lambda T \overline{{F_T}}(x_1)}{\lambda T \overline{{F_T}}(x_1)}\right|+\left|\frac{\psi_2(x_2,T)-\Psi_{\rm max}(x_1,x_2,T)}{\lambda T \overline{{F_T}}(x_2)}\right|\cdot\left|\frac{\overline{{F_T}}(x_2)}
{\overline{{F_T}}(x_1)}\right|\\
&&\leq\left|\frac{\psi_1(x_1,T)-\lambda T \overline{{F_T}}(x_1)}{\lambda T \overline{{F_T}}(x_1)}\right|+\left|\frac{\psi_2(x_2,T)-\Psi_{\rm max}(x_1,x_2,T)}{\lambda T \overline{{F_T}}(x_2)}\right|\rightarrow 0,\ \mbox{as}\ x_2\geq x_1\rightarrow \infty.
\end{eqnarray*}
\end{proof}

\acks

The authors acknowledge the helpful suggestions and comments of one anonymous referee,
which helped improve the first version of this manuscript. We are grateful to the support of NNSFC (Grant No. 10801072).


%
%
%
%

\end{document}